\date{3 May 2016}
 \title{A finite version of the Kakeya problem} 
 \author[Ball]{Simeon Ball}\thanks{The first author acknowledges the support of the project MTM2014-54745-P of the Spanish {\em Ministerio de Economia y Competitividad.}} 
  \author[Blokhuis]{Aart Blokhuis}
   \author[Domenzain]{Diego Domenzain}
\newtheorem{theorem}{Theorem}[section] 
\newtheorem{lemma}[theorem]{Lemma}     
\newtheorem{example}{Example}
\begin{document}
 
 \begin{abstract}
Let $L$ be a set of lines of an affine space over a field and let $S$ be a set of points with the property that every line of $L$ is incident with at least $N$ points of $S$. Let $D$ be the set of directions of the lines of $L$ considered as points of the projective space at infinity. We give a geometric construction of a set of lines $L$, where $D$ contains an $N^{n-1}$ grid and where $S$ has size $2(\frac{1}{2}N)^n$ plus smaller order terms, given a starting configuration in the plane. We provide examples of such starting configurations for the reals and for finite fields. Following Dvir's proof of the finite field Kakeya conjecture and the idea of using multiplicities of Dvir, Kopparty, Saraf and Sudan, we prove a lower bound on the size of $S$ dependent on the ideal generated by the homogeneous polynomials vanishing on $D$. This bound is maximised as $(\frac{1}{2}N)^n$ plus smaller order terms, for $n\geqslant 4$, when $D$ contains the points of a $N^{n-1}$ grid.
 \end{abstract}
 
\maketitle

 
\section{Introduction}
 
Let $\mathrm{AG}_n({\mathbb K})$ denote the $n$-dimensional affine space over the field ${\mathbb K}$ and let $\mathrm{PG}_n({\mathbb K})$ denote the $n$-dimensional projective space over the field ${\mathbb K}$. 

Let $L$ be a set of lines of $\mathrm{AG}_n({\mathbb K})$ and let $D$ be the set of directions of the lines of $L$, viewed as points of the projective space $\mathrm{PG}_{n-1}({\mathbb K})$ at infinity. Let $S$ be a set of points of $\mathrm{AG}_n({\mathbb K})$ with the property that every line of $L$ is incident with at least $N$ points of $S$. 

In the case that ${\mathbb K}={\mathbb F}_q$ and $N=q$, Dvir \cite{Dvir2009} proved that if $D$ is the set of all directions then $|S| > q^n/n!$, answering a question posed by Wolff in \cite{Wolff1999}. Dvir, and subsequently Saraf and Sudan \cite{SS2008}, provided examples where $D$ is the set of all directions and sets of points $S$ for which $|S|=2(\frac{1}{2}q)^n$ plus smaller order terms. The lower bound on $|S|$ was improved for $n\geqslant 4$ to $|S| \geqslant (\frac{1}{2}q)^n+c(n)q^{n-1}$ for some $c(n)$, by Dvir, Kopparty, Saraf and Sudan in \cite{DKSS2013}.

We define a $N^{n-1}$ {\em grid} in $\mathrm{PG}_{n-1}({\mathbb K})$ as a point set, which with respect to a suitable basis, has the form
$$
\{ \langle (a_1,\ldots,a_{n-1},1) \rangle \ | \ a_i \in A_i \},
$$
where $A_i$ is a subset of ${\mathbb K}$ of size $N$ for all $i=1,\ldots,n-1$. Here, we introduce the notation that if $(a_1,\ldots,a_n)$ is a vector, then $\langle (a_1,\ldots,a_n) \rangle$ is the corresponding point in $\mathrm{PG}_{n-1}({\mathbb K})$.

The aim of this article is to reformulate the Kakeya problem in a far more general setting. There are a couple of recent articles by Slavov in which he formulates the Kakeya problem in an algebraic geometric setting, see \cite{Slavov2014a} and \cite{Slavov2014b}. Here we consider any arbitrary finite set of lines in an affine space over any fixed field ${\mathbb K}$. We prove lower bounds on the size of $S$ that depend on $I(D)$, the ideal generated by the homogeneous polynomials of ${\mathbb K}[X_1,\ldots,X_n]$ which are zero at all points of $D$. Firstly, we will give a geometric construction of a set $L$ of $N^{n-1}$ lines, whose directions contain a $N^{n-1}$ grid and a set $S$ of roughly $2(\frac{1}{2}N)^{n}$ points with the property that every line of $L$ is incident with at least $N$ points of $S$.

\section{A geometric construction of Kakeya sets}

For any two non-intersecting subspaces $x$ and $y$ of a projective space we denote by $x\oplus y$ the subspace that they span.

Let $x_0,x_1,\ldots,x_n$ be projective points in general position which will remain fixed throughout. Let 
$$
\Sigma_i=x_0 \oplus x_1 \oplus \cdots \oplus x_i,
$$
and let
$$
\pi_i=x_1 \oplus x_2 \oplus \cdots \oplus x_i,
$$
for $i=1,\ldots,n$. Then $\Sigma_n$ is the entire space $\mathrm{PG}_n({\mathbb K})$, $\pi_n$ we consider as the hyperplane at infinity and $\Sigma_n \setminus \pi_n$ is the affine space $\mathrm{AG}_n({\mathbb K})$ where we shall construct the set of lines $L'$.

Let $y_i$ be a third point on the line $x_{i-1} \oplus x_i$, for $i=3,\ldots,n$, so
$$
(x_i\oplus y_i) \cap \pi_{i-1}=x_{i-1}.
$$

Let $L$ be a set of lines of $\Sigma_2$, incident with distinct points of $\pi_2 \setminus \{x_2\}$. So by interpreting $\pi_2$ as the line at infinity, all the lines in $L$ have distinct directions. Note that if ${\mathbb K}$ is infinite then we can always find a line which intersects the lines of a finite set of lines in distinct points. More generally for a set of lines in $\mathrm{AG}_n({\mathbb K})$, with ${\mathbb K}$ infinite, we can always find a hyperplane which intersects the lines of a finite set of lines in distinct points. After a suitable change of basis this hyperplane can be assumed to be the hyperplane at infinity. 

Label the lines of $L$ so that they are $\ell_{\{1\}},\ldots,\ell_{\{|L|\}}$, and define 
$$
p_{\{i\}}=\ell_{\{i\}} \cap \pi_2.
$$
By assumption, $p_{\{1\}},\ldots,p_{\{|L|\}}$ are distinct points of $\pi_2$.



For an ordered subset $J$ of $\{1,\ldots,|L|\}$, $|J|<n$, we define a line $\ell_J$ of $\Sigma_{|J|+1}$ recursively by
$$
\ell_{J}=(x_{|J|+1} \oplus \ell_{J \setminus \{a \}}) \cap (y_{|J|+1} \oplus \ell_{J \setminus \{b \}}), 
$$
where $a$ and $b$ are the last two elements of $J$, so $J=(\ldots,b,a)$.

In the same way, we define points $p_J$ of $\pi_{|J|+1}$ recursively by
$$
p_{J}=(x_{|J|+1} \oplus p_{J \setminus \{a \}}) \cap (y_{|J|+1} \oplus  p_{J \setminus \{b \}}).
$$

\begin{lemma} \label{welldef}
The lines $\ell_J$ are well-defined and distinct, as are the points $p_J$. Furthermore, the line $\ell_J$ intersects $\pi_{|J|+1}$ in the point $p_J$.
\end{lemma}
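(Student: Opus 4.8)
The plan is to prove all three assertions simultaneously by induction on $|J|$, with the following inductive hypothesis at level $m$: for every ordered subset $K$ of $\{1,\dots,|L|\}$ with $1\le|K|\le m-1$ the objects $\ell_K$ and $p_K$ are well defined, $\ell_K$ is a line of $\Sigma_{|K|+1}$ meeting $\pi_{|K|+1}$ in exactly the point $p_K$ (so in particular $\ell_K\not\subseteq\pi_{|K|+1}$ and $p_K\in\ell_K$), $p_K\neq x_{|K|+1}$, and the $\ell_K$ with $|K|=m-1$ are pairwise distinct, as are the $p_K$ with $|K|=m-1$. The base case $|K|=1$ is immediate from the construction of $L$: the $\ell_{\{i\}}$ are the (distinct) lines of $L$, each is a line of the plane $\Sigma_2$ different from $\pi_2$, hence meets the line $\pi_2$ in the single point $p_{\{i\}}$, and by hypothesis $p_{\{i\}}\in\pi_2\setminus\{x_2\}$ with the $p_{\{i\}}$ distinct.

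For the inductive step write $J=(\dots,b,a)$ with $|J|=m$ and set $k=|J|+1=m+1$. From the general position of $x_0,\dots,x_n$ one has $x_k\notin\Sigma_m$; one also has $y_k\notin\Sigma_m$, because $y_k$ lies on the line $x_{k-1}\oplus x_k$, which meets the hyperplane $\Sigma_m$ of $\Sigma_k$ only in $x_{k-1}=x_m$, while $y_k\neq x_m$. Hence $P_1:=x_k\oplus\ell_{J\setminus\{a\}}$ and $P_2:=y_k\oplus\ell_{J\setminus\{b\}}$ are planes of $\Sigma_k$ with $P_1\cap\Sigma_m=\ell_{J\setminus\{a\}}$ and $P_2\cap\Sigma_m=\ell_{J\setminus\{b\}}$. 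The crucial point is a coplanarity produced by the recursion: unwinding the definitions of $\ell_{J\setminus\{a\}}$ and $\ell_{J\setminus\{b\}}$ shows that, when $m\ge3$, both of them lie in the single plane $x_m\oplus\ell_{(j_1,\dots,j_{m-2})}$, while for $m=2$ both lie in $\Sigma_2$; since they are distinct lines by the inductive hypothesis, their span $\Pi$ is exactly that plane, and in particular $x_m\in\Pi$. As $x_k\oplus y_k$ equals the line $x_m\oplus x_k$, we obtain $P_1\oplus P_2=(x_m\oplus x_k)\oplus\Pi=x_k\oplus\Pi$, which is three-dimensional because $x_k\notin\Sigma_m\supseteq\Pi$; the dimension formula then gives $\dim(P_1\cap P_2)=1$, so $\ell_J:=P_1\cap P_2$ is a well-defined line of $\Sigma_k$. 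Running the identical argument with the points $p_{J\setminus\{a\}},p_{J\setminus\{b\}}$ of $\pi_m$ in place of the lines — using the inductive facts $p_{J\setminus\{a\}}\neq x_m$ and $p_{J\setminus\{a\}}\neq p_{J\setminus\{b\}}$ — shows that $x_k\oplus p_{J\setminus\{a\}}$ and $y_k\oplus p_{J\setminus\{b\}}$ are distinct coplanar lines, so they meet in a single point $p_J$; a short check in the same vein gives $p_J\notin\{x_k,y_k\}$.

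I would then verify the incidence $\ell_J\cap\pi_{|J|+1}=\{p_J\}$. On one hand $x_k\in\pi_k$ and $p_{J\setminus\{a\}}\in\pi_m\subseteq\pi_k$, so the line $x_k\oplus p_{J\setminus\{a\}}$, and with it $p_J$, lies in $\pi_k$; on the other hand $p_{J\setminus\{a\}}\in\ell_{J\setminus\{a\}}$ and $p_{J\setminus\{b\}}\in\ell_{J\setminus\{b\}}$ by induction, so $x_k\oplus p_{J\setminus\{a\}}\subseteq P_1$ and $y_k\oplus p_{J\setminus\{b\}}\subseteq P_2$, giving $p_J\in P_1\cap P_2=\ell_J$. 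Finally $\ell_J\cap\Sigma_m=\ell_{J\setminus\{a\}}\cap\ell_{J\setminus\{b\}}$ is a single point $q_J$, and $q_J\notin\pi_k$: otherwise $q_J\in\Sigma_m\cap\pi_k=\pi_m$ would force $q_J=p_{J\setminus\{a\}}=p_{J\setminus\{b\}}$, contradicting the inductive distinctness. So $\ell_J$ contains the point $q_J$ off $\pi_k$, hence is not contained in $\pi_k$ and meets it only in $p_J$.

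For distinctness at level $m$: since $p_J\notin\{x_k,y_k\}$, projecting $p_J$ from $x_k$ yields $p_{J\setminus\{a\}}$ and projecting from $y_k$ yields $p_{J\setminus\{b\}}$ (these are the points where $x_k\oplus p_J$ and $y_k\oplus p_J$ meet $\Sigma_m$); by the inductive distinctness at level $m-1$ these two points determine the ordered sets $J\setminus\{a\}$ and $J\setminus\{b\}$, which together determine $J$, so the $p_J$ with $|J|=m$ are pairwise distinct, and since $p_J=\ell_J\cap\pi_{|J|+1}$ so are the $\ell_J$. I expect the genuine obstacle to be the well-definedness of $\ell_J$: for $m\ge3$ two planes of the $(m+1)$-dimensional space $\Sigma_{m+1}$ generically do not meet in a line, so no dimension count alone suffices, and the whole argument turns on the recursive coplanarity that forces $P_1\oplus P_2$ to be only three-dimensional.
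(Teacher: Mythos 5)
Your proof is correct and follows essentially the same route as the paper's: induction on $|J|$, with the key observation that $\ell_{J\setminus\{a\}}$ and $\ell_{J\setminus\{b\}}$ both lie in the plane $x_{|J|}\oplus\ell_{J\setminus\{a,b\}}$, which forces the two planes $x_{|J|+1}\oplus\ell_{J\setminus\{a\}}$ and $y_{|J|+1}\oplus\ell_{J\setminus\{b\}}$ into a common $3$-space, and with distinctness obtained because $p_J$ determines the ordered sets $J\setminus\{a\}$ and $J\setminus\{b\}$ and hence $J$. You are in fact somewhat more careful than the paper on points it leaves implicit (the non-degeneracies $p_J\notin\{x_{|J|+1},y_{|J|+1}\}$, the well-definedness of $p_J$ itself, and the verification that $\ell_J$ meets $\pi_{|J|+1}$ in exactly the point $p_J$).
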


\begin{proof}
By induction on $|J|$. For $|J|=2$, let $J=\{b,a\}$. The line $\ell_J$ is well-defined, since $x_3 \oplus \ell_{\{b\}}$ and $y_3 \oplus \ell_{\{a\}}$ are two distinct planes in a $3$-space and therefore intersect in a line. For $|J| \geqslant 3$, suppose $J=\{\ldots,b,a\}$. The lines $\ell_{J \setminus \{a \}}$ and $\ell_{J \setminus \{b \}}$ are both contained in the plane $\ell_{J \setminus \{a,b \}} \oplus x_{|J|}$, so intersect in a point. Moreover $x_{|J|+1} \oplus y_{|J|+1}$ contains the point $x_{|J|}$ and $x_{|J|}$ is in the plane $\ell_{J \setminus \{a \}} \oplus \ell_{J \setminus \{b \}}$, since $\ell_{J \setminus \{a \}} \oplus \ell_{J \setminus \{b \}}=\ell_{J \setminus \{a,b \}} \oplus x_{|J|}$. Therefore, $x_{|J|+1} \oplus \ell_{J \setminus \{a \}}$ and $y_{|J|+1} \oplus \ell_{J \setminus \{b \}}$ are distinct planes contained in a $3$-space. Hence, their intersection is a line. 

By induction, the point $p_J$ is distinct for distinct $J$ since it is the intersection of the line $x_{|J|+1} \oplus p_{J \setminus \{a \}}$ and 
$y_{|J|+1} \oplus p_{J \setminus \{b \}}$ and $J \setminus \{a \}$ and $J \setminus \{b \}$ determine $J$ (knowing they are both ordered $(|J|-1)$-subsets of $J$ determines $J$).
Since $\ell_J$ is incident with the point $p_J$ it follows that $\ell_J$ is also distinct for distinct $J$.
\end{proof}

\begin{lemma} \label{jgrid}
The set of points 
$$
\{ p_J \ | \ J \ \mathrm{ordered} \ \mathrm{subset} \ \mathrm{ of} \ L,\ |J|=n \}
$$ 
is contained in a $|L|^{n}$ grid of $\pi_{n+1}$.
\end{lemma}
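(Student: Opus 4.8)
The plan is to introduce projective coordinates and compute the homogeneous coordinates of each $p_J$ explicitly. Fix a basis $e_1,\dots,e_{n+1}$ of the vector space underlying $\pi_{n+1}$ with $\langle e_i\rangle=x_i$. Since $y_i$ lies on $x_{i-1}\oplus x_i$ and differs from both $x_{i-1}$ and $x_i$, we may write $y_i=\langle e_{i-1}+c_ie_i\rangle$ with $c_i\neq 0$; and since $p_{\{1\}},\dots,p_{\{|L|\}}$ are distinct points of $\pi_2\setminus\{x_2\}$, we may write $p_{\{i\}}=\langle e_1+t_ie_2\rangle$ with $t_1,\dots,t_{|L|}$ pairwise distinct.

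The heart of the argument is the following claim, which I would prove by induction on $m=|J|$: there exist vectors $b_1,\dots,b_m$, depending only on $m$, such that $e_1,b_1,\dots,b_m$ is a basis of the vector space underlying $\pi_{m+1}$, the vector $b_m$ is a nonzero scalar multiple of $e_{m+1}$, and for every ordered $m$-subset $J=(j_m,\dots,j_1)$ one has $p_J=\langle e_1+t_{j_m}b_1+t_{j_{m-1}}b_2+\cdots+t_{j_1}b_m\rangle$. The base case $m=1$ is immediate with $b_1=e_2$. For the inductive step, write $J=(j_{m+1},\dots,j_1)$; then the recursive definition of $p_J$ reads $p_J=(x_{m+2}\oplus p_K)\cap(y_{m+2}\oplus p_{K'})$ with $K=(j_{m+1},\dots,j_2)$ and $K'=(j_{m+1},\dots,j_3,j_1)$, both ordered $m$-subsets, so the inductive hypothesis applies to $p_K$ and $p_{K'}$. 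Substituting these expressions, together with $x_{m+2}=\langle e_{m+2}\rangle$ and $y_{m+2}=\langle e_{m+1}+c_{m+2}e_{m+2}\rangle$, one solves for the common point $p_J$ of the two lines; because $b_m$ involves $e_{m+1}$ only, collecting the $t_{j_2}$- and $t_{j_1}$-terms puts $p_J$ in the form $\langle e_1+\sum_{\ell=1}^{m+1}t_{j_{m+2-\ell}}b'_\ell\rangle$, where $b'_\ell=b_\ell$ for $\ell<m$, $b'_m$ is a nonzero multiple of $e_{m+1}+c_{m+2}e_{m+2}$, and $b'_{m+1}$ is a nonzero multiple of $e_{m+2}$. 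A short linear-algebra check confirms that $e_1,b'_1,\dots,b'_{m+1}$ is again a basis of the vector space underlying $\pi_{m+2}$ and that $b'_{m+1}\in\langle e_{m+2}\rangle$, closing the induction.

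Applying the claim with $m=n$, every $p_J$ with $|J|=n$ has homogeneous coordinates $(1,t_{j_n},t_{j_{n-1}},\dots,t_{j_1})$ with respect to the basis $e_1,b_1,\dots,b_n$; using instead the basis $b_1,\dots,b_n,e_1$, we get $p_J=\langle(t_{j_n},\dots,t_{j_1},1)\rangle$. As $(j_n,\dots,j_1)$ runs over the ordered $n$-subsets, the tuple $(t_{j_n},\dots,t_{j_1})$ lies in $A^n$, where $A=\{t_1,\dots,t_{|L|}\}$ has size $|L|$; hence the set $\{p_J:|J|=n\}$ is contained in $\{\langle(a_1,\dots,a_n,1)\rangle : a_1,\dots,a_n\in A\}$, which is by definition a $|L|^n$ grid of $\pi_{n+1}$. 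The main obstacle is identifying the correct inductive statement — that after a linear change of coordinates the homogeneous coordinates of $p_J$ are exactly $1$ together with the list $t_{j_n},\dots,t_{j_1}$ — and then handling the index bookkeeping in the intersection computation; once this is set up, the rest is routine.
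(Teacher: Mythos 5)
Your proof is correct and follows essentially the same route as the paper: both fix coordinates with $x_i$ as basis points and $y_i$ on $x_{i-1}\oplus x_i$, induct on $|J|$ to show the coordinates of $p_J$ are an invertible triangular transform of $(1,t_{j_m},\ldots,t_{j_1})$, and conclude by a change of basis. The only difference is presentational: the paper normalises $y_i=\langle e_{i-1}+e_i\rangle$ and computes the coordinates of $p_J$ explicitly as $(-1)^i(d_{i-1}-d_{i-2})$, whereas you carry abstract basis vectors $b_\ell$ and track only the properties needed to close the induction.
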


\begin{proof}
Fix a basis of $\pi_{n+2}$ so that $x_1,\ldots,x_{n+2}$ are points derived from the canonical basis. For $i=3,\ldots,n+2$, let $y_i=\langle (0,\ldots,0,1,1,0,\ldots,0)\rangle$, where the non-zero coordinates are the $(i-1)$-th and $i$-th coordinate.

Let $J=\{a_1,\ldots,a_n \}$ and define $d_i$ by
$$
\ell_{\{a_i\}} \cap \pi_2 = \langle (1,d_i,0\ldots,0)\rangle,
$$
for $i=1,\ldots,j$. Note that $\{ d_1,\ldots,d_j \}$ is a subset of
$$
D=\{ d \ | \ \mathrm{there} \ \mathrm{exists} \ \ell \in L \ \mathrm{such} \ \mathrm{that} \ \ell \cap \pi_2=\langle (1,d,0,\ldots,0) \rangle \}.
$$

We will prove by induction that 
$$
p_J=\langle (1,d_1,\ldots) \rangle
$$
and that the $i$-th coordinate of $p_J$ for $|J|+1 \geqslant i\geqslant 3$ is $(-1)^i(d_{i-1}-d_{i-2})$ and zero for $i\geqslant |J|+2$.

Let $J=\{ a_1,\ldots,a_j\}$, $\overline{J}=\{a_1,\ldots,a_j,a_{j+1} \}$ and $J'=\{a_1,\ldots,a_{j-1},a_{j+1} \}$. Then by definition
$$
p_{\overline{J}}=(p_J \oplus x_{j+2}) \cap (p_{J'} \oplus y_{j+2}).
$$
Since the first $j$ coordinates of $x_{j+2}$ and $y_{j+2}$ are zero, the $i$-th coordinate of $p_{\overline{J}}$ for $i\leqslant j$ is $(-1)^i(d_{i-1}-d_{i-2})$. Since the $(j+1)$-st coordinate of $x_{j+2}$ is zero, the $(j+1)$-st coordinate of $p_{\overline{J}}$ is $(-1)^{j+1}(d_{j}-d_{j-1})$. If $\lambda$ is the $(j+2)$-nd coordinate then solving the equation given by the intersection (comparing the $(j+1)$-st coordinate) gives
$$
(-1)^{j+1}(d_{j}-d_{j-1})=\lambda+(-1)^{j+1}(d_{j+1}-d_{j-1}),
$$
which proves the induction.

By applying a simple change of basis we see that the set 
$$
\{ p_J \ | \ J \ \mathrm{ordered} \ \mathrm{subset} \ \mathrm{ of} \ L,\ |J|=j \}
$$ 
is contained in the $|L|^{j}$ grid,
$$
\{\langle (1,e_1,\ldots,e_j) \rangle \ | \ e_1,\ldots,e_j \in D \}.
$$
\end{proof}

\begin{theorem} \label{setoflines}
The set of lines
$$
\{ \ell_J \ | \ J \ \mathrm{ordered} \ \mathrm{subset} \ \mathrm{ of} \ L,\ |J|=j \}
$$ 
is a set of lines in $\Sigma_{j+1}$ whose directions are distinct and contained in a $|L|^{j-1}$ grid of $\pi_{j+1}$.
\end{theorem}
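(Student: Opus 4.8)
The statement is essentially a repackaging of Lemmas \ref{welldef} and \ref{jgrid}, so the plan is to assemble those two results. First I would check that, for an ordered $j$-subset $J$, the object $\ell_J$ really is a line lying in $\Sigma_{j+1}$. That $\ell_J$ is a line is part of Lemma \ref{welldef}; that it lies in $\Sigma_{j+1}=\Sigma_{|J|+1}$ follows by a short induction on $|J|$ straight from the recursive definition. Indeed, if $\ell_{J\setminus\{a\}}\subseteq\Sigma_{|J|}$, then $x_{|J|+1}\oplus\ell_{J\setminus\{a\}}\subseteq\Sigma_{|J|}\oplus x_{|J|+1}=\Sigma_{|J|+1}$, and likewise $y_{|J|+1}\oplus\ell_{J\setminus\{b\}}\subseteq\Sigma_{|J|+1}$, since $y_{|J|+1}$ lies on $x_{|J|}\oplus x_{|J|+1}\subseteq\Sigma_{|J|+1}$; the base case $\ell_{\{i\}}\subseteq\Sigma_2$ holds by hypothesis. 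Hence every $\ell_J$ with $|J|=j$ is a line of $\Sigma_{j+1}$.

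Next I would read off the directions. The hyperplane at infinity of the subspace $\Sigma_{j+1}$ is $\pi_{j+1}$, exactly as $\pi_n$ plays that role for the whole space $\Sigma_n$; so the direction of the affine line $\ell_J$ is the point $\ell_J\cap\pi_{j+1}$, which by Lemma \ref{welldef} is precisely $p_J$. Lemma \ref{welldef} also asserts that the points $p_J$ are pairwise distinct as $J$ runs over the ordered $j$-subsets of $\{1,\dots,|L|\}$, and therefore the lines $\ell_J$, $|J|=j$, have pairwise distinct directions.

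Finally, for the grid I would appeal to Lemma \ref{jgrid}. Although it is stated only for $|J|=n$, its proof is an induction on $|J|$ and establishes, for every $j$, that $\{\,p_J : |J|=j\,\}$ is contained in the grid $\{\,\langle (1,e_1,\dots,e_j)\rangle : e_1,\dots,e_j\in D\,\}$ of $\pi_{j+1}$; since the lines of $L$ meet $\pi_2\setminus\{x_2\}$ in distinct points we have $|D|=|L|$, so this is the grid of $\pi_{j+1}$ claimed (a grid on $|L|^{\,j}$ points, matching Lemma \ref{jgrid}). Combining the three observations proves the theorem. I do not expect a genuine obstacle, since all of the substance already sits in the two lemmas; the only points needing a little care are to interpret ``direction'' inside the correct ambient subspace $\Sigma_{j+1}$ — it agrees with the direction in the full affine space $\mathrm{AG}_n(\mathbb{K})$ only when $j=n-1$ — and to use the general-$j$ conclusion that is proved, but only stated for $j=n$, inside Lemma \ref{jgrid}.
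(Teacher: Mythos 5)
Your proof is correct and takes exactly the same route as the paper, whose entire proof is the one-line remark that the theorem follows from the definitions of $\ell_J$ and $p_J$ together with Lemma~\ref{welldef} and Lemma~\ref{jgrid}; you simply make explicit the details the paper leaves implicit (that $\ell_J\subseteq\Sigma_{j+1}$, that the direction of $\ell_J$ is $p_J$ and these are distinct, and that the general-$j$ statement is what the induction in Lemma~\ref{jgrid} actually proves). Your reading of the grid as $|L|^{j}$ rather than the stated $|L|^{j-1}$ is the one consistent with Lemma~\ref{jgrid} and with the later use in Theorem~\ref{geomcon}, so the exponent in the theorem statement appears to be a typo that you have, correctly, silently repaired.
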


\begin{proof}
This follows from the definitions of $\ell_J$ and $p_J$, Lemma~\ref{welldef} and Lemma~\ref{jgrid}.
\end{proof}

Let $m$ be a line of $\Sigma_2$ incident with the point $x_2$ and not the line $\pi_2$. If the line $\ell_{\{i\}}$ and $\ell_{\{j\}}$ meet on the line $m$ then define their intersection to be the point
$$
z_{\{i\},\{j\},m}=\ell_{\{i\}} \cap \ell_{\{j\}} \cap m.
$$

Let $J=\{a_1,\ldots,a_j\}$ and $\overline{J}=\{\overline{a}_1,\ldots,\overline{a}_j\}$ be disjoint ordered subsets of $\{1,\ldots,|L|\}$ such that
$$
\ell_{\{a_i\}} \cap \ell_{\{\overline{a}_i\}} \cap m
$$
is a point for all $i=1,\ldots,j$. For each such occurrence of $m$, $J$ and $\overline{J}$ define a point recursively by
$$
z_{J,\overline{J},m}=(x_{|J|+1} \oplus z_{J \setminus \{a\},\overline{J}\setminus \{\overline{a}\},m}) \cap (y_{|J|+1} \oplus z_{J \setminus \{b\},\overline{J}\setminus \{\overline{b}\},m})
$$
where $J=(\ldots,b,a)$ and $\overline{J}=(\ldots,\overline{b},\overline{a})$.

\begin{lemma} \label{zonl}
The point $z_{J,\overline{J},m}$ is incident with the line $\ell_J$.
\end{lemma}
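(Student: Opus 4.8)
The plan is to prove this by induction on $|J|$, following verbatim the two parallel recursions that define $z_{J,\overline{J},m}$ and $\ell_J$. For the base case $|J|=1$, write $J=\{a_1\}$; then by definition $z_{\{a_1\},\{\overline{a}_1\},m}=\ell_{\{a_1\}}\cap\ell_{\{\overline{a}_1\}}\cap m$, which in particular lies on $\ell_{\{a_1\}}=\ell_J$.

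For the inductive step, write $J=(\ldots,b,a)$ and $\overline{J}=(\ldots,\overline{b},\overline{a})$, so that
$$
z_{J,\overline{J},m}=(x_{|J|+1}\oplus z_{J\setminus\{a\},\overline{J}\setminus\{\overline{a}\},m})\cap(y_{|J|+1}\oplus z_{J\setminus\{b\},\overline{J}\setminus\{\overline{b}\},m}).
$$
The pairs $J\setminus\{a\},\overline{J}\setminus\{\overline{a}\}$ and $J\setminus\{b\},\overline{J}\setminus\{\overline{b}\}$ are again disjoint and still have the property that $\ell_{\{a_i\}}\cap\ell_{\{\overline{a}_i\}}\cap m$ is a point for every index that survives the deletion, so the inductive hypothesis applies to them: the point $z_{J\setminus\{a\},\overline{J}\setminus\{\overline{a}\},m}$ lies on the line $\ell_{J\setminus\{a\}}$ and the point $z_{J\setminus\{b\},\overline{J}\setminus\{\overline{b}\},m}$ lies on the line $\ell_{J\setminus\{b\}}$. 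Consequently the line $x_{|J|+1}\oplus z_{J\setminus\{a\},\overline{J}\setminus\{\overline{a}\},m}$ is contained in the plane $x_{|J|+1}\oplus\ell_{J\setminus\{a\}}$, and likewise $y_{|J|+1}\oplus z_{J\setminus\{b\},\overline{J}\setminus\{\overline{b}\},m}$ is contained in the plane $y_{|J|+1}\oplus\ell_{J\setminus\{b\}}$. Since $z_{J,\overline{J},m}$ lies on both of these lines, it lies in both of these planes, hence in their intersection, which by the definition of $\ell_J$ equals $\ell_J$ --- a line, by Lemma~\ref{welldef}. This closes the induction.

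The argument is essentially bookkeeping and I do not anticipate any real obstacle; the points that warrant a little care are all routine. First, one should check that deleting the last, respectively second-to-last, element of $J$ (and the corresponding element of $\overline{J}$) preserves the standing hypotheses, so that both the recursion and the inductive hypothesis are legitimately available at the previous level. Second, one invokes Lemma~\ref{welldef} to be certain that the planes $x_{|J|+1}\oplus\ell_{J\setminus\{a\}}$ and $y_{|J|+1}\oplus\ell_{J\setminus\{b\}}$ meet exactly in the line $\ell_J$, and, by the same kind of dimension count as in that lemma, that the two lines whose intersection defines $z_{J,\overline{J},m}$ are coplanar, so that $z_{J,\overline{J},m}$ is indeed a point. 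I note in passing that the same computation also shows $z_{J,\overline{J},m}$ lies on $\ell_{\overline{J}}$, although only incidence with $\ell_J$ is asserted here.
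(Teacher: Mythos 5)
Your proof is correct and follows essentially the same route as the paper: induction on $|J|$, with the base case immediate from the definition and the inductive step obtained by noting that the two points at the previous level lie on $\ell_{J\setminus\{a\}}$ and $\ell_{J\setminus\{b\}}$, so $z_{J,\overline{J},m}$ lies in both planes whose intersection defines $\ell_J$. The extra care you take about well-definedness of $z_{J,\overline{J},m}$ and the coplanarity of the two lines is a reasonable addition that the paper leaves implicit.
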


\begin{proof}
By induction on $|J|$. For $|J|=1$, this follows directly from the definition. 

By the induction hypothesis, we suppose $z_{J \setminus \{a\},\overline{J}\setminus \{\overline{a}\},m}$ is incident with $\ell_{J \setminus \{a\}}$ and $z_{J \setminus \{b\},\overline{J}\setminus \{\overline{b}\},m}$ is incident with $\ell_{J \setminus \{b\}}$. Then, by the definition of $\ell_J$, the point $z_{J,\overline{J},m}$ is incident with the line $\ell_J$.
\end{proof}

\begin{lemma} \label{expo}
Let $J$ and $\overline{J}$ be disjoint ordered non-empty subsets of $\{1,\ldots,|L|\}$ of the same size. Suppose $J'$ and $\overline{J}'$ are disjoint ordered $|J|$-subsets of $\{1,\ldots,|L|\}$ where the $i$-th element of $J'$ and $\overline{J}'$ is either the $i$-th element of $J$ or the $i$-th element of $\overline{J}$, for $i=1,\ldots,|J|$. Then
$$
z_{J,\overline{J},m}=z_{J',\overline{J}',m}.
$$
\end{lemma}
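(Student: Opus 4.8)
The plan is a direct induction on the common size $|J|$, the point being that the index-set manipulations in the recursive definition of $z$ are compatible with the ``column swaps'' appearing in the hypothesis. Say that a pair $(J',\overline J')$ of disjoint ordered subsets of $\{1,\ldots,|L|\}$ is a \emph{column swap} of $(J,\overline J)$ if they have the same size and, for every position $i$, the unordered pair formed by the $i$-th entries of $J'$ and $\overline J'$ equals the one formed by the $i$-th entries of $J$ and $\overline J$; given that $J$ and $\overline J$ are disjoint, this is precisely the relation between $(J,\overline J)$ and $(J',\overline J')$ described in the statement. Observe that every column swap of a disjoint pair is again disjoint, and that $\ell_{\{J'_i\}}\cap\ell_{\{\overline J'_i\}}\cap m=\ell_{\{J_i\}}\cap\ell_{\{\overline J_i\}}\cap m$, so all the non-degeneracy assumptions needed for $z_{J',\overline J',m}$ to be defined are inherited from $(J,\overline J)$. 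It therefore suffices to prove: if $(J',\overline J')$ is a column swap of $(J,\overline J)$, then $z_{J',\overline J',m}=z_{J,\overline J,m}$.

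For $|J|=1$, a column swap of $(J,\overline J)$ is either $(J,\overline J)$ itself or $(\overline J,J)$, and $z_{\{a\},\{\overline a\},m}=\ell_{\{a\}}\cap\ell_{\{\overline a\}}\cap m$ is symmetric in $a$ and $\overline a$. For $|J|\geqslant 2$, write $J=(\ldots,b,a)$ and $\overline J=(\ldots,\overline b,\overline a)$, and let $a',b'$ (resp. $\overline a',\overline b'$) be the last two entries of $J'$ (resp. $\overline J'$); since $(J',\overline J')$ is a column swap, $\{a',\overline a'\}=\{a,\overline a\}$ and $\{b',\overline b'\}=\{b,\overline b\}$. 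In
$$
z_{J',\overline{J}',m}=(x_{|J|+1} \oplus z_{J' \setminus \{a'\},\overline{J}'\setminus \{\overline{a}'\},m}) \cap (y_{|J|+1} \oplus z_{J' \setminus \{b'\},\overline{J}'\setminus \{\overline{b}'\},m})
$$
the operation $J'\mapsto J'\setminus\{a'\}$ just deletes the last entry, so $(J'\setminus\{a'\},\overline J'\setminus\{\overline a'\})$ is a column swap of $(J\setminus\{a\},\overline J\setminus\{\overline a\})$; the operation $J'\mapsto J'\setminus\{b'\}$ deletes the second-to-last entry, leaving the first $|J|-2$ entries followed by the last, so $(J'\setminus\{b'\},\overline J'\setminus\{\overline b'\})$ is a column swap of $(J\setminus\{b\},\overline J\setminus\{\overline b\})$. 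By the induction hypothesis the two points on the right-hand side are unchanged if all the primes are removed, hence so is their intersection, which is exactly $z_{J,\overline J,m}$.

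The only thing needing care is the bookkeeping inside the induction step: checking that ``delete the last entry'' and ``delete the second-to-last entry'' each send a column swap to a column swap. Writing the tuples out coordinate by coordinate makes this routine, so I do not anticipate a real obstacle; the lemma is in essence a formal consequence of the symmetry of the base case $|J|=1$ propagated through the recursion.
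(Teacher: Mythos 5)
Your proof is correct and takes essentially the same approach as the paper's: induction on $|J|$, with the base case resting on the symmetry of $\ell_{\{a\}}\cap\ell_{\{\overline{a}\}}\cap m$ in $a$ and $\overline{a}$, and the inductive step observing that the recursion defining $z_{J,\overline{J},m}$ is compatible with swapping the $i$-th entries of $J$ and $\overline{J}$. The paper states this in two lines; your version just makes the bookkeeping explicit.
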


\begin{proof}
For $|J|=1$ this is clear. It follows from the recursive defintion of $z_{J,\overline{J},m}$ that switching the $i$-th element of $J$ and $\overline{J}$ will not affect the point $z_{J,\overline{J},m}$.
\end{proof}

\begin{figure}[h]
\centering
\includegraphics[width=5.6 in]{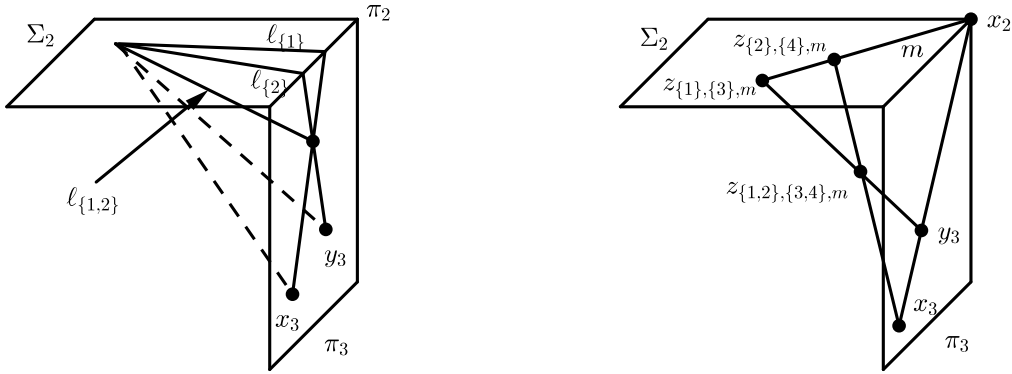}
\caption{The construction of the line $\ell_{\{1,2\}}$ and the point $z_{\{1,2 \},\{3,4\},m}$}.
\label{geomkak}
\end{figure}

\begin{theorem} \label{geomcon}
Suppose that $L$ is a set of $N$ lines of $\mathrm{AG}_2({\mathbb K})$ and let $S$ be a set of points with the property that every line of $L$ is incident with $N$ points of $S$. Suppose that there are $N$ parallel lines $m_i$, which are incident with $\frac{1}{2}N-\epsilon_i$ points of $S$ which themselves are incident with two lines of $L$, where $\epsilon_1,\ldots,\epsilon_N$ have the property that
$$
\sum_{i=1}^N \epsilon_i \leqslant dN,
$$ 
for some constant $d$, not depending on $N$. 

Then there is a set $L'$ of $N^{n-1}$ lines in $\mathrm{AG}_n({\mathbb K})$, $n \leqslant \frac{1}{2}N+1$, whose directions contain a $N^{n-1}$ grid and a set of points $S'$ with the property that every line of $L'$ is incident with $N$ points of $S'$ and where $S'$ has less than $2(\frac{1}{2}N)^n+cN^{n-1}$ points, for some $c=c(n)$. 
\end{theorem}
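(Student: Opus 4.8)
The plan is to feed the planar configuration $(L,S)$ into the recursive construction of Section~2 with $|L|=N$, to work with the lines $\ell_J$ for $|J|=n-1$, and to build $S'$ out of the lifted intersection points $z_{J,\overline{J},m_i}$ topped up by a controlled number of extra points on each line. First I would normalise: choose a basis so that the lines of $L$ lie in $\Sigma_2\setminus\pi_2=\mathrm{AG}_2({\mathbb K})$ and meet $\pi_2$ in distinct points, so that the common point at infinity of the parallel lines $m_1,\dots,m_N$ is $x_2$, and so that the roles of $\pi_2$, $S$ and the $m_i$ agree with those in Section~2; any line of $L$ parallel to the $m_i$ may be discarded, changing every count below only by $O(1)$ lines (and $N\leqslant|{\mathbb K}|$ throughout, as the planar hypothesis already forces).

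Running the construction, Theorem~\ref{setoflines} produces a family $\{\ell_J : J\text{ an ordered }(n-1)\text{-subset of }L\}$ of lines of $\Sigma_n=\mathrm{AG}_n({\mathbb K})$ with pairwise distinct directions, and the coordinate computation in the proof of Lemma~\ref{jgrid} identifies these directions, in a suitable basis of $\pi_n$, with exactly the points of the $N^{n-1}$ grid on $D$ whose coordinates are pairwise distinct; there are $N(N-1)\cdots(N-n+2)$ of them. For each of the remaining $N^{n-1}-N(N-1)\cdots(N-n+2)$ grid points I add one affine line of $\mathrm{AG}_n({\mathbb K})$ with that direction, so that $L'$ consists of exactly $N^{n-1}$ lines whose directions are precisely an $N^{n-1}$ grid.

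To build $S'$: on each $m_i$ the $\tfrac12 N-\epsilon_i$ points of $S$ lying on two lines of $L$ define $\tfrac12 N-\epsilon_i$ pairwise disjoint pairs of lines, i.e.\ a partial matching $M_i$ on $\{1,\dots,N\}$. Every ordered $(n-1)$-tuple of distinct pairs of $M_i$ splits, by reading off first and second coordinates, into disjoint ordered $(n-1)$-subsets $J,\overline{J}$, and then $z_{J,\overline{J},m_i}$ is defined and lies on $\ell_J$ by Lemma~\ref{zonl}; I put all such points into $S'$ and add $N$ arbitrary affine points on each grid-completing line. Fixing $\ell_J$: by Lemma~\ref{expo} the point $z_{J,\overline{J},m_i}$ depends only on $m_i$ and the resulting tuple of unordered pairs, and — just as at level one, using that $z_{J,\overline{J},m_i}$ is collinear with $x_{|J|+1}$ and with its predecessor on $\ell_{J\setminus\{a\}}$, which lies in $\Sigma_{|J|}$ — distinct $m_i$ yield distinct points on $\ell_J$. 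Hence $\ell_J$ already carries at least $N-b_J$ points of $S'$, where $b_J$ counts the $i$ for which some element of $J$ is $M_i$-unmatched, or two elements of $J$ are $M_i$-matched to each other, or $|M_i|<n-1$; adding at most $b_J$ further points of $S'$ on each $\ell_J$ brings every line of $L'$ up to at least $N$ points.

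It remains to bound $|S'|$. The lifted points number at most $\sum_{i=1}^N(\tfrac12 N-\epsilon_i)(\tfrac12 N-\epsilon_i-1)\cdots(\tfrac12 N-\epsilon_i-n+2)$, which since $0\leqslant\epsilon_i\leqslant\tfrac12 N$ is (for $n\geqslant3$, the $O(1)$ exceptional $m_i$ with $|M_i|<n-1$ being absorbed into the error term) strictly less than $\sum_{i=1}^N(\tfrac12 N-\epsilon_i)^{\,n-1}\leqslant N(\tfrac12 N)^{n-1}=2(\tfrac12 N)^n$. The grid-completing lines contribute at most $N\bigl(N^{n-1}-N(N-1)\cdots(N-n+2)\bigr)\leqslant\binom{n-1}{2}N^{n-1}$ points, and a double count over pairs $(J,i)$ bounds $\sum_J b_J$ by
$$
2(n-1)\Bigl(\sum_i\epsilon_i\Bigr)\frac{(N-1)!}{(N-n+1)!}+\binom{n-1}{2}\frac{N!}{(N-n+1)!}+N^{n-1}\bigl|\{i:|M_i|<n-1\}\bigr|,
$$
the three terms coming from the three kinds of failure defining $b_J$; by $\sum_i\epsilon_i\leqslant dN$ and the hypothesis $n\leqslant\tfrac12 N+1$ (which keeps the last cardinality bounded) this is $O(N^{n-1})$. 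Altogether $|S'|<2(\tfrac12 N)^n+c(n)N^{n-1}$ with $c(n)$ depending only on $n$ and $d$. I expect the main obstacle to be precisely this last double count: controlling, via $\sum_i\epsilon_i\leqslant dN$, how the deficiencies $b_J$ pile up on the many tuples $J$ that contain a given badly matched line, and dealing cleanly with the few $m_i$ carrying fewer than $n-1$ of the special points (the borderline $n\approx\tfrac12 N+1$ forcing the corresponding $\epsilon_i$ to be essentially zero). Checking that the lifted points coincide and are affine as claimed is then routine bookkeeping with Lemmas~\ref{zonl} and~\ref{expo}.
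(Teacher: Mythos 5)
Your proposal follows the paper's proof essentially verbatim: the same sets $L'=\{\ell_M\}$ and $S'=\{z_{M,\overline{M},m_i}\}$ built from the Section~2 machinery, the same leading-order count $\sum_i(\tfrac{1}{2}N-\epsilon_i)(\tfrac{1}{2}N-\epsilon_i-1)\cdots<2(\tfrac{1}{2}N)^n$, the same use of Lemmas~\ref{zonl} and \ref{expo} to control how points sit on lines, and the same completion of the grid by $O(N^{n-2})$ extra lines carrying $N$ points each. The only cosmetic difference is that you bound the $O(N^{n-1})$ missing incidences by classifying the failing pairs $(J,m_i)$ directly, whereas the paper obtains the same bound by comparing the $2^{n-1}|S'|$ constructed incidences against the $N\cdot|L'|$ required ones; both versions invoke $\sum_i\epsilon_i\leqslant dN$ in the same way (and both leave the degenerate regime where $N$ is comparable to $2(n-1)$ to be absorbed into the constant $c(n)$).
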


\begin{proof}
Let 
$$
L'=\{ \ell_M \ | \ M \ \mathrm{ordered} \ \mathrm{subset} \ \mathrm{ of} \ L,\ |M|=n-1 \}
$$ 
and let
$$
S'=\{ z_{M,\overline{M},m} \ | \ M,\overline{M} \ \mathrm{ordered} \ \mathrm{subsets} \ \mathrm{ of} \ L,\ |M|=|\overline{M}|=n-1,\ m=m_i,\ \mathrm{for} \ \mathrm{some} \ i \}.
$$
The point $ z_{M,\overline{M},m}$ is only defined if $M$ and $\overline{M}$ are disjoint, which imposes the condition $N \geqslant 2(n-1)$.

By Theorem~\ref{setoflines}, the set $L'$ contains $N(N-1)\ldots(N-n+2)$ lines and the lines determine distinct directions contained in a $N^{n-1}$ grid.

The set $S'$ contains 
$$
\sum_{i=1}^N(\tfrac{1}{2}N-\epsilon_i)(\tfrac{1}{2}N-\epsilon_i-1)\cdots (\tfrac{1}{2}N-\epsilon_i-n+2)
$$
points. By Lemma~\ref{zonl} and Lemma~\ref{expo}, a point of $S'$ is incident with $2^{n-1}$ lines of $L'$. Therefore, we have constructed at least $N^n-c(n)N^{n-1}$ incidences between lines of $L'$ and points of $S'$, for some $c(n)$. 

The set $L'$ contains $N(N-1)\ldots(N-n+2)$ lines and we would like each line to be incident with $N$ points of $S'$. Therefore we are missing less than $c(n)N^{n-1}$ incidences. For a fixed $M$ and $m$ the point $z_{M,\overline{M},m}$ does not depend on $\overline{M}$, since it is defined recursively from the points $\ell_{\{ i\}} \cap m$, where $i \in M$. Therefore, no line of $L'$ is incident with more than $N$ points of $S'$ (at most one for each line $m_i$, $i=1,\ldots,N$). So we can add less than $c(n)N^{n-1}$ points to $S'$ so that every line of $L'$ is incident with $N$ points of $S'$. This does not affect the first order term of $|S'|$, which is $N^n/2^{n-1}$. 

Finally, we add lines to $L'$ and $N$ points to $S'$ for each of these lines, so that we have a line with every direction of the $N^{n-1}$ grid. Thus far we have constructed ${N \choose n-1}(n-1)!$ lines in $L'$, so we add less than $c'N^{n-2}$ lines to $L'$ to complete the grid and add at most $c'N^{n-1}$ points to $S'$, for some $c'=c'(n)$. Again, this does not affect the first order term of $|S'|$.
\end{proof}

\begin{example}
\rm{If ${\mathbb K}={\mathbb F}_q$ and $N=q$ then we can take $L$ to be the lines of a dual conic (or any oval), where one of the lines is taken to be the line at infinity $\pi_2$. The points of $S$ will include the affine points incident with a line of $L$.

Let $x$ be the point incident with $\pi_2$ and not incident with a line of $L$. The lines $m_1,\ldots,m_N$ will be the $q$ affine lines incident with $x$. Suppose $q$ is odd. Since each point not on the conic but incident with a tangent to the conic is incident with $(q-1)/2$ bisecants, we have $\epsilon=\frac{1}{2}$ for all $i=1,\ldots,q$ before we add points to $S$. Adding $N$ points to $S$ does not affect the fact that the condition on the $\epsilon_i$. If $q$ is even then each point not on the conic but incident with a tangent to the conic is incident with $q/2$ bisecants, except one point which is incident with no bisecants. Therefore, $\epsilon_i=0$ for $i=1,\ldots,q-1$ and $\epsilon_q=\frac{1}{2}q$. Again, adding $N$ points to $S$ does not affect the condition on the $\epsilon_i$.}
\end{example}

\begin{example} \label{reals}
\rm{If ${\mathbb K}={\mathbb R}$ then we can take $L$ to be the set of lines dual to a regular $N$-gon. We dualise in such a way that the line at infinity becomes a point on the line at infinity. Let $S$ be the set of affine points dual to the bisecants to the $N$-gon. This gives $N-1$ points on each line of $L$ and we arbitrarily add an additional point to $S$ incident with $\ell$, for each line $\ell \in L$. 

The line joining $(\cos(2\pi a/N),\sin(2\pi a/N),1)$ and $(\cos(2\pi b/N),\sin(2\pi b/N),1)$ meets the line at infinity in the point $(-\tan(\pi (a+b)/N),1,0)$ (\cite[Proposition 2.1]{GT2013}), so there are precisely $N$ points on the line at infinity where the bisecants meet. 

Let $p_1,\ldots,p_N$ be the $N$ points on the line at infinity where the bisecants meet. Let $m_1,\ldots,m_N$ be the $N$ (parallel) lines dual to the points $p_1,\ldots,p_N$. Before we add points to $S$ we have that if $N$ is even then $\epsilon_i=0$ for $i=1,\ldots,\frac{1}{2}N$ and $\epsilon_i=1$ for $i=\frac{1}{2}N+1,\ldots,N$ and if $N$ is odd then $\epsilon_i=\frac{1}{2}$ for $i=1,\ldots,N$, ordering the lines in a suitable way. Adding $N$ points to $S$ does not affect the condition on $\epsilon_1,\ldots,\epsilon_N$.}
\end{example}

In \cite{GK2010}, Guth and Katz prove that if $L$ is a set of lines in $\mathrm{AG}_3({\mathbb R})$, no $N$ of which are contained in a plane, and if $S$ is a set of points with the property that every line of $L$ is incident with at least $N$ points of $S$, then $|S|>cN^3$ for some (very small) constant $c$. Example~\ref{reals}, together with Theorem~\ref{geomcon}, provide an example of such a set of lines for which $|S|=\frac{1}{4}N^3$ plus smaller order terms. 

\section{A lower bound for $|S|$}

The proofs in this section are essentially from \cite{DKSS2013}. Although in \cite{DKSS2013} they restrict to the case that the field is finite, in this section we verify that the proofs carry over to the general case without any issue.

Let $\mathcal{J}=({\mathbb Z}_{\geqslant 0})^n$ be the set $n$-tuples of non-negative integers. For any $j,c \in \mathcal{J}$, we define the $j$-Hasse derivative of $X^c=\prod_{i=1}^n X_i^{c_i}$ as
$$
\partial^j(X^c)=\prod_{i=1}^n {c_i \choose j_i} X_i^{c_i-j_i},
$$
where we use the convention ${a \choose b}=0$ if $b>a$. This definition extends to polynomials by linearity. 

For a polynomial $f \in {\mathbb K}[X_1,\ldots,X_n]$, we define $V(f)$ to be the affine points which are zeros of $f$.

For any $j \in \mathcal{J}$, let $\mathrm{wt}(j)=\sum_{i=1}^n j_i$. We say that a polynomial $f \in {\mathbb K}[X_1,\ldots,X_n]$ has a {\em zero of multiplicity $m$} at a point $u$ of $\mathrm{AG}_n({\mathbb K})$ if $u \in V(\partial^j f)$ for all $j \in \mathcal{J}$, where $\mathrm{wt}(j) \leqslant m-1$. 

If $u$ is a zero of multiplicity $m$ of $f$ and $\mathrm{wt}(j)=r$ then $u$ is a zero of multiplicity at least $m-r$ of $\partial^j f$, see \cite{DKSS2013}.

Let $L$ be a set of lines of $\mathrm{AG}_n({\mathbb K})$ and let $D$ be the set of directions of the lines of $L$, viewed as points of the projective space $\mathrm{PG}_{n-1}({\mathbb K})$ at infinity. Let $S$ be a set of points of $\mathrm{AG}_n({\mathbb K})$ with the property that every line of $L$ is incident with at least $N$ points of $S$. 

Let $I_r(D)$ be the ideal of homogeneous polynomials of ${\mathbb K}[X_1,\ldots,X_n]$ which have zeros of multiplicity at least $r$ at all points of $D$. 

For any $f \in {\mathbb K}[X_1,\ldots,X_n]$, let $f^*$ denote the polynomial consisting of the terms of $f$ of highest degree. By degree, we will always mean the total degree.

\begin{theorem} \label{zeevetal}
If $U$ is a subspace of ${\mathbb K}[X_1,\ldots,X_n]$ of polynomials of degree at most $rN-1$ with the property that for all non-zero $f \in U$, $f^* \not\in I_{r}(D)$ then
$$
{2r+n-2 \choose n}|S|\geqslant \dim U.
$$
\end{theorem}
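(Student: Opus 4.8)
The plan is to run the polynomial method with multiplicities, in the style of Dvir--Kopparty--Saraf--Sudan, adapted to a subspace $U$ rather than the full space of low-degree polynomials. The key tension is between two counts: a polynomial vanishing to high multiplicity on $S$ has many linear constraints imposed on it, while a polynomial of degree $rN-1$ that vanishes to order $r$ at $N$ points of each line of $L$ must in fact vanish identically on each such line, forcing $f^*$ to vanish to order $r$ on every point of $D$ — i.e. $f^* \in I_r(D)$ — contradicting the hypothesis on $U$ unless $f=0$.

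\begin{proof}
Suppose, for contradiction, that $\binom{2r+n-2}{n}|S| < \dim U$. The number of Hasse derivatives $\partial^j$ with $\mathrm{wt}(j) \leqslant 2r-2$ is $\binom{2r-2+n}{n}=\binom{2r+n-2}{n}$, so requiring a polynomial to have a zero of multiplicity $2r-1$ at each point of $S$ imposes at most $\binom{2r+n-2}{n}|S|$ linear conditions. Since this number is strictly less than $\dim U$, there is a non-zero $f \in U$ which has a zero of multiplicity $2r-1$ at every point of $S$.

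Now fix a line $\ell \in L$, say $\ell = \{a + tb \mid t \in {\mathbb K}\}$ where $\langle b \rangle \in D$ is its direction. Consider the univariate restriction $g(t) = f(a+tb) \in {\mathbb K}[t]$. Since $f$ has degree at most $rN-1$, $g$ has degree at most $rN-1$ as well. For each of the $N$ points of $S$ on $\ell$, the corresponding value of $t$ is a root of $g$ of multiplicity at least $r$: this is the standard fact that a zero of multiplicity $m$ of $f$ restricts, along any line through it, to a root of multiplicity at least $m$ of the restriction (one checks this via the multinomial expansion of $f(a+tb)$ using the Hasse derivatives, taking $m = 2r-1 \geqslant r$). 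Hence $g$ has at least $rN$ roots counted with multiplicity, but $\deg g \leqslant rN-1$, so $g \equiv 0$. Thus $f$ vanishes identically on every line of $L$.

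It remains to deduce $f^* \in I_r(D)$, which gives the contradiction. Fix $\langle b \rangle \in D$, so $b$ is the direction of some $\ell = \{a+tb\} \in L$. Write $f = \sum_{k=0}^{e} f_k$ where $f_k$ is the homogeneous part of degree $k$ and $e = \deg f$, so $f^* = f_e$. For any $j$ with $\mathrm{wt}(j) = \nu \leqslant r-1$, the derivative $\partial^j f$ also vanishes identically on $\ell$ (differentiate the identity $f(a+tb)=0$, or note $\partial^j f$ vanishes on $S \cap \ell$ to multiplicity $\geqslant 2r-1-\nu \geqslant r$ and has degree $\leqslant rN-1-\nu$, so its restriction, of degree $\leqslant rN-1$, has $\geqslant rN$ roots with multiplicity). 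Now expand $(\partial^j f)(a+tb)$ as a polynomial in $t$: its leading coefficient, the coefficient of $t^{e-\nu}$, equals $(\partial^j f_e)(b) = (\partial^j f^*)(b)$, the evaluation of the top-degree part of $\partial^j f$ at $b$. Since $(\partial^j f)(a+tb) \equiv 0$, every coefficient vanishes, in particular $(\partial^j f^*)(b) = 0$. As $j$ with $\mathrm{wt}(j)\leqslant r-1$ was arbitrary, $f^*$ has a zero of multiplicity at least $r$ at $\langle b \rangle$, and as $\langle b \rangle \in D$ was arbitrary, $f^* \in I_r(D)$. This contradicts the defining property of $U$, since $f \neq 0$.
\end{proof}

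The step I expect to be the main obstacle is the last one: cleanly extracting that $f^*$ vanishes to order $r$ at each point of $D$. It is essential here to work with all the Hasse derivatives of weight up to $r-1$ and to track where the top-degree part goes under differentiation — the point being that $(\partial^j f)^* = \partial^j(f^*)$ provided no cancellation occurs in the top degree, which holds because $\partial^j$ either lowers the degree of a homogeneous polynomial by exactly $\mathrm{wt}(j)$ or kills it, uniformly. The degree bookkeeping ($\deg \leqslant rN-1$ versus $rN$ forced roots, surviving the loss of $\mathrm{wt}(j)$ from differentiation) is routine but must be stated carefully, and is exactly why the hypothesis caps the degree of $U$ at $rN-1$ rather than $rN$.
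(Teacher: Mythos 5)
Your proposal is correct and follows essentially the same route as the paper: a linear-algebra count (with $\binom{2r+n-2}{n}$ Hasse-derivative conditions per point of $S$) produces a non-zero $f\in U$ vanishing to multiplicity $2r-1$ on $S$; then for each direction in $D$ the restriction of $\partial^j f$ ($\mathrm{wt}(j)\leqslant r-1$) to a line of $L$ has at least $rN$ roots with multiplicity against degree at most $rN-1$, so it vanishes identically, and extracting the top coefficient gives $\partial^j f^*(v)=0$, whence $f^*\in I_r(D)$, a contradiction. The only cosmetic difference is that you first show $f$ itself vanishes on each line before treating the derivatives, whereas the paper handles all $\partial^j f$ at once; the substance is identical.
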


\begin{proof}
Suppose that ${2r+n-2 \choose n}|S| < k$, where $\dim U=k$. Let $f(X)$ be a polynomial of $U$, so
$$
f(X)=\sum_{i=1}^k a_ih_i(X),
$$
where $\{h_1,\ldots,h_k\}$ is a basis for $U$, for some $a_i \in {\mathbb K}$. 

We wish to show that there is a non-zero polynomial in $U$ which has a zero of multiplicity at least $2r-1$ at all points $x \in S$. A polynomial $f$ has a zero of multiplicity at least $2r-1$ at all points $x \in S$ if and only if for every $x \in S$, $\partial^j f$ has a zero of multiplicity $2r-1-\mathrm{wt}(j)$ at $x$. For each $j \in \mathcal{J}$, where $\mathrm{wt}(j) \leqslant 2r-2$, $\partial^j f(x)=0$ is a linear homogeneous equation with unknowns $a_1,\ldots,a_k$. Thus we get a system of ${n+2r-2 \choose n}|S|$ linear homogeneous equations and $k$ unknowns. Since ${n+2r-2 \choose n}|S|<k$ there must be a non-trivial solution and so there is a non-zero $f \in U$ such that $f$ has a zero of multiplicity at least $2r-1$ at all points $x \in S$.

Let $v$ be a vector of the $n$-dimensional vector space, such that the subspace spanned by $v$ is an element of $D$. By hypothesis, there is a $u \in \mathrm{AG}_n({\mathbb K})$ and $N$ distinct values $\lambda \in {\mathbb K}$ with the property that $u+\lambda v$ is a zero of $f$ of multiplicity at least $2r-1$. For any $j \in \mathcal{J}$ with $\mathrm{wt}(j) \leqslant r-1$, $u+\lambda v$ is a zero of $\partial^j f$ of multiplicity at least $r$. Since $d=\deg f \leqslant rN-1$, it follows that $\partial^j f(u+\lambda v)$ is identically zero as a polynomial in $\lambda$. The coefficient of $\lambda^d$ of $\partial^j f$ is $\partial^j f^*(v)$. Hence, $\partial^j f^*(v)=0$ for all $j \in \mathcal{J}$, where $\mathrm{wt}(j) \leqslant r-1$. This implies $f^*$ has a zero of multiplicity $r$ at all points of $D$ and so $f^* \in I_r(D)$,  which is a contradiction, since $f \in U$ implies $f^*\not\in I_r(D)$.
\end{proof}

Theorem~\ref{zeevetal} allows us to give an explicit lower bound for $|S|$ if $D$ contains an $N^{n-1}$ grid.

Recall that we defined a $N^{n-1}$ grid in $\mathrm{PG}_{n-1}({\mathbb K})$ as
$$
\{ \langle (a_1,\ldots,a_{n-1},1) \rangle \ | \ a_i \in A_i \},
$$
where $A_i$ is a subset of ${\mathbb K}$ of size $N$ for all $i=1,\ldots,n-1$.

\begin{theorem} \label{grid}
If $D$ contains an $N^{n-1}$ grid then, for any $r\in {\mathbb N}$,
$$
{2r+n-2 \choose n}|S| \geqslant {rN+n-1 \choose n}.
$$
\end{theorem}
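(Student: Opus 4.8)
The plan is to apply Theorem~\ref{zeevetal} with $U$ taken to be the \emph{whole} space of polynomials of ${\mathbb K}[X_1,\ldots,X_n]$ of degree at most $rN-1$. The number of monomials of total degree at most $rN-1$ in $n$ variables is ${(rN-1)+n \choose n}={rN+n-1 \choose n}$, so $\dim U={rN+n-1 \choose n}$, which is precisely the right-hand side of the asserted inequality. Hence it suffices to check that this $U$ meets the hypothesis of Theorem~\ref{zeevetal}, namely that $f^{*}\notin I_{r}(D)$ for every nonzero $f\in U$; Theorem~\ref{zeevetal} will then give ${2r+n-2 \choose n}|S|\geqslant\dim U={rN+n-1 \choose n}$, as required.

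So fix a nonzero $f\in U$, put $d=\deg f\leqslant rN-1$, and suppose for contradiction that $f^{*}\in I_{r}(D)$. Since $D$ contains an $N^{n-1}$ grid $G$, we have $I_{r}(D)\subseteq I_{r}(G)$, so $f^{*}\in I_{r}(G)$. Write $G$, in suitable coordinates, as $\{\langle(a_{1},\ldots,a_{n-1},1)\rangle : a_{i}\in A_{i}\}$ with $|A_{i}|=N$, and put $q(Y_{1},\ldots,Y_{n-1})=f^{*}(Y_{1},\ldots,Y_{n-1},1)$. Grouping $f^{*}$ by $X_{n}$-degree, $f^{*}=\sum_{i}X_{n}^{i}g_{i}$ with each $g_{i}$ homogeneous in $Y_{1},\ldots,Y_{n-1}$ of degree $d-i$, so $q=\sum_{i}g_{i}$ is a sum of homogeneous polynomials of pairwise distinct degrees; as $f^{*}\neq 0$ at least one $g_{i}$ is nonzero, hence $q\neq 0$ and $\deg q\leqslant d\leqslant rN-1$. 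For every $j=(j_{1},\ldots,j_{n-1})$ with $\mathrm{wt}(j)\leqslant r-1$, since the Hasse derivative $\partial^{(j_{1},\ldots,j_{n-1},0)}$ does not involve $X_{n}$ it commutes with setting $X_{n}=1$, and hence, using $f^{*}\in I_{r}(G)$,
$$
\partial^{(j_{1},\ldots,j_{n-1})}q(a_{1},\ldots,a_{n-1})=\big(\partial^{(j_{1},\ldots,j_{n-1},0)}f^{*}\big)(a_{1},\ldots,a_{n-1},1)=0
$$
for all $(a_{1},\ldots,a_{n-1})\in A_{1}\times\cdots\times A_{n-1}$. Thus $q$ vanishes with multiplicity at least $r$ at every point of the affine grid $A_{1}\times\cdots\times A_{n-1}$.

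The argument is then finished by the grid case of the Schwartz--Zippel lemma with multiplicities: a nonzero polynomial in any number of variables that vanishes with multiplicity at least $r$ at every point of a grid $A_{1}\times\cdots\times A_{m}$ with $\min_{i}|A_{i}|=N$ has total degree at least $rN$. (For $m=1$ this is just divisibility by $\prod_{a\in A_{1}}(Y_{1}-a)^{r}$; the general case follows by induction on $m$, holds over an arbitrary field, and is proved in \cite{DKSS2013}.) Applied to $q$, this forces $\deg q\geqslant rN$, contradicting $\deg q\leqslant rN-1$. Therefore $f^{*}\notin I_{r}(D)$ for every nonzero $f\in U$, and the theorem follows from Theorem~\ref{zeevetal}. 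The main obstacle is the grid lemma invoked in the last paragraph; the monomial count and the passage from the projective grid to the affine one are routine.
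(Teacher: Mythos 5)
Your proof is correct and follows the same overall strategy as the paper: take $U$ to be the full space of polynomials of degree at most $rN-1$, observe that $\dim U={rN+n-1 \choose n}$, and reduce the whole theorem to the single claim that a nonzero homogeneous polynomial vanishing with multiplicity at least $r$ at every point of the grid must have degree at least $rN$. Where you genuinely differ is in how that degree bound is justified. The paper asserts that the ideal $I_r$ of the grid is generated by the $r$-fold products of the polynomials $g_i(X)=\prod_{a\in A_i}(X_i-aX_n)$, each of degree $N$, so that every nonzero element of the ideal has degree at least $rN$; this generation statement (in effect, that the $r$-th symbolic power of the ideal of a complete intersection of points coincides with the $r$-th ordinary power) is stated without proof. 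You instead dehomogenize at $X_n=1$, check correctly that Hasse derivatives in the first $n-1$ variables commute with that substitution and that the dehomogenization of a nonzero homogeneous polynomial is nonzero, and then invoke the multiplicity Schwartz--Zippel bound over the affine grid from \cite{DKSS2013}. Your route relies on a weaker, fully documented input and is arguably the more robust of the two; the paper's route, granting the generation claim, yields more structural information about $I_r(D)$ but proves nothing extra that the theorem needs. In either case the inequality then follows from Theorem~\ref{zeevetal} exactly as you describe, so there is no gap in your argument.
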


\begin{proof}
The ideal $I_r(D)$ is generated by products of $r$ (not necessarily distinct) polynomials from the set $\{ g_1,\ldots,g_{n-1}\}$, where
$$
g_i(X)=\prod_{a \in A_i} (X_i-aX_n). 
$$
Any non-zero polynomial in $I_r(D)$ has degree at least $rN$, so we can set $U$ to be the subspace of all polynomials in ${\mathbb K}[X_1,\ldots,X_n]$ of degree at most $rN-1$. 
\end{proof}

\begin{theorem} \label{grid2}
If $D$ contains an $N^{n-1}$ grid then 
$$
|S| \geqslant {N+n-1 \choose n}.
$$
\end{theorem}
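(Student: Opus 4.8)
The plan is to obtain Theorem~\ref{grid2} as the $r=1$ instance of Theorem~\ref{grid}. Recall that Theorem~\ref{grid} asserts, for \emph{every} $r\in{\mathbb N}$,
$$
{2r+n-2 \choose n}|S| \geqslant {rN+n-1 \choose n}.
$$
The only real decision is which $r$ to plug in, and the choice $r=1$ is forced if one wants the exact closed form in the statement: then the left-hand coefficient collapses, since ${2r+n-2 \choose n}={n \choose n}=1$, while the right-hand side becomes ${rN+n-1 \choose n}={N+n-1 \choose n}$. So Theorem~\ref{grid} for $r=1$ reads precisely $|S|\geqslant{N+n-1 \choose n}$, which is the claim. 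In short, the proof is a one-line specialisation.

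To make the write-up self-contained it is worth spelling out what $r=1$ means in the underlying argument, namely the multiplicity-free Dvir argument. One takes $U$ to be the full space of polynomials of ${\mathbb K}[X_1,\ldots,X_n]$ of degree at most $N-1$, of dimension ${N+n-1 \choose n}$, and applies Theorem~\ref{zeevetal} with $r=1$. The hypothesis of Theorem~\ref{zeevetal} to be checked is that every nonzero $f\in U$ has $f^*\notin I_1(D)$; here $I_1(D)$ is just the ideal of homogeneous polynomials vanishing on $D$. Since $D$ contains an $N^{n-1}$ grid $A_1\times\cdots\times A_{n-1}$ with each $|A_i|=N$, the polynomial $g_i(X)=\prod_{a\in A_i}(X_i-aX_n)$ of degree $N$ lies in $I_1(D)$, and more importantly no nonzero homogeneous polynomial of degree $<N$ can vanish on the grid; hence any $f$ of degree at most $N-1$ has $f^*\notin I_1(D)$. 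With $\dim U={N+n-1 \choose n}$ and ${2\cdot1+n-2 \choose n}=1$, Theorem~\ref{zeevetal} yields the bound directly.

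I do not expect any genuine obstacle: the entire content has already been established in Theorems~\ref{zeevetal} and~\ref{grid}, and nothing in their proofs used $r\geqslant 2$, so the case $r=1$ is legitimate. The only point requiring a moment's care is the elementary fact that a nonzero homogeneous polynomial vanishing on an $N^{n-1}$ grid must have degree at least $N$, which is immediate from a Combinatorial-Nullstellensatz-type argument in one variable at a time (or simply noting it is recorded in the proof of Theorem~\ref{grid}). Finally, one might add a remark that letting $r\to\infty$ in Theorem~\ref{grid} gives the asymptotically stronger bound $|S|\geqslant(\tfrac12 N)^n$ plus lower-order terms, but that $r=1$ is exactly what produces the clean closed form ${N+n-1 \choose n}$ stated here.
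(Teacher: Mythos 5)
Your proof is correct and is exactly the paper's argument: the paper's proof of this theorem is the single line ``Put $r=1$ in Theorem~\ref{grid},'' which is precisely your specialisation, using that $\binom{2\cdot 1+n-2}{n}=1$. The additional unpacking of the $r=1$ case of Theorem~\ref{zeevetal} is consistent with the paper's proof of Theorem~\ref{grid} and adds nothing that needs checking.
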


\begin{proof}
Put $r=1$ in Theorem~\ref{grid}.
\end{proof}

The following theorem improves on the lower bound in Theorem~\ref{grid2} for $n\geqslant 4$.

\begin{theorem} \label{grid3}
If $D$ contains an $N^{n-1}$ grid then $|S| \geqslant (\frac{1}{2}N)^n$.
\end{theorem}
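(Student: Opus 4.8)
The plan is to apply Theorem~\ref{grid} with the parameter $r$ kept free and then let $r\to\infty$. Theorem~\ref{grid} gives, for every $r\in{\mathbb N}$,
$$
|S|\ \geqslant\ \frac{\binom{rN+n-1}{n}}{\binom{2r+n-2}{n}}.
$$
The first step is to read off the leading behaviour of the two binomial coefficients as polynomials in $r$ of degree $n$. One has $\binom{rN+n-1}{n}=\frac{1}{n!}\prod_{i=0}^{n-1}(rN+i)$, whose top-degree term in $r$ is $\frac{N^n}{n!}r^n$, and $\binom{2r+n-2}{n}=\frac{1}{n!}\prod_{i=-1}^{n-2}(2r+i)$, whose top-degree term is $\frac{2^n}{n!}r^n$. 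Hence the quotient on the right-hand side converges, as $r\to\infty$, to $N^n/2^n=(\tfrac12 N)^n$.

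Since $|S|$ is a fixed number lying above every term of a sequence converging to $(\tfrac12 N)^n$, the inequality passes to the limit and yields $|S|\geqslant(\tfrac12 N)^n$, which is the assertion of the theorem.

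A couple of remarks on why this is the right argument and where care is needed. The limiting process is genuinely necessary: for $n\geqslant 4$ one has $n!>2^n$, so the case $r=1$ — which is exactly Theorem~\ref{grid2}, giving $|S|\geqslant\binom{N+n-1}{n}\sim N^n/n!$ — produces a bound strictly below $(\tfrac12 N)^n$ for large $N$, and only by letting $r$ grow does one reach $(\tfrac12 N)^n$. Beyond that, I do not anticipate any real obstacle: all of the substance is already carried by Theorem~\ref{grid} (and, further back, by Theorem~\ref{zeevetal} and its Hasse-derivative/multiplicity estimate). The only point worth stating carefully is the elementary asymptotics of the binomial quotient together with the standard fact that a non-strict inequality valid for all $r$ survives the passage to the limit $r\to\infty$.
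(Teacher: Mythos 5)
Your proof is correct and follows essentially the same route as the paper: apply Theorem~\ref{grid} for arbitrary $r$ and let $r\to\infty$ in the quotient of binomial coefficients. If anything, your explicit passage to the limit is the careful version of the paper's terser ``choose $r$ large enough'', since for $n\geqslant 4$ and large $N$ the quotient approaches $(\tfrac12 N)^n$ from below and need not exceed it at any finite $r$; the conclusion nevertheless holds because, as you note, a non-strict inequality valid for all $r$ survives the limit.
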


\begin{proof}
By Theorem~\ref{grid} we have
$$
|S| \geqslant \frac{(rN+n-1)(rN+n-2)\ldots (rN)}{(2r+n-2)(2r+n-3)\ldots (2r-1)}.
$$
This gives $|S| \geqslant (\frac{1}{2}N)^n$ if we choose $r$ large enough.

\end{proof}

\section{Acknowledment}
We would like to thank the anonymous referees who made various suggestions and helpful comments, vastly improving the readability of the article.

\bigskip

{\small Simeon Ball}  \\
{\small Departament de Matem\`atiques}, \\
{\small Universitat Polit\`ecnica de Catalunya, Jordi Girona 1-3},
{\small M\`odul C3, Campus Nord,}\\
{\small 08034 Barcelona, Spain} \\
{\small {\tt simeon@ma4.upc.edu}}

{\small Aart Blokhuis} \\
{\small Department of Mathematics and Computing Science}, \\
{\small Eindhoven University of Technology},
{\small P.O. Box 513 \\
{\small 5600MB Eindhoven, The Netherlands}\\
{\small {\tt aartb@win.tue.nl}}

{\small Diego Domenzain} \\
{\small Michigan Technological University,}\\
{\small 630 Dow Environmental Sciences And Engineering Building,}\\
{\small 1400 Townsend Drive, Houghton,}\\
{\small Michigan 49931, United States}\\

\end{document}